\newtheorem{theorem}{Theorem}
\newtheorem{lemma}[theorem]{Lemma}
\newtheorem{corollary}[theorem]{Corollary}
\newtheorem{proposition}[theorem]{Proposition}
\renewcommand{\AA}{\mathcal{A}}
\theoremstyle{remark}
\newtheorem*{remarks}{Remarks}
\numberwithin{theorem}{section} \numberwithin{equation}{section}
\numberwithin{figure}{section}
\newcommand{\cM}{\mathcal{M}}
\newcommand{\wt}{\kappa}
\newcommand{\HH}{\mathbb{H}}
\newcommand{\QQ}{\mathcal{Q}}
\newcommand{\ChiQ}{\chi}
\newcommand{\CoH}{\mathcal{H}}
\newcommand{\R}{\mathbb{R}}
\newcommand{\C}{\mathbb{C}}
\newcommand{\CC}{\mathcal{C}}
\newcommand{\Z}{\mathbb{Z}}
\newcommand{\N}{\mathbb{N}}
\newcommand{\SL}{{\text {\rm SL}}}
\newcommand{\PSL}{{\text {\rm PSL}}}
\newcommand{\shin}{\mathscr{S}^*}
\newcommand{\re}{\textnormal{Re}}
\def\H{\mathbb{H}}
\def\ca{{\mathfrak a}}
\def\cb{{\mathfrak b}}
\newcommand{\Hcusp}{H_{2-2k}^{\text{cusp}}}
\renewcommand{\SS}{\mathbb{S}}
\newcommand{\DD}{\mathcal{D}}
\begin{document}
\title{On cycle integrals of weakly holomorphic modular forms}

\author{Kathrin Bringmann} 
\address{Mathematical Institute\\University of
Cologne\\ Weyertal 86-90 \\ 50931 Cologne \\Germany}
\email{kbringma@math.uni-koeln.de}
\author{Pavel Guerzhoy}
\address{Department of Mathematics\\ University of Hawaii\\ Honolulu, HI 96822-2273}
\email{pavel@math.hawaii.edu}
\author{Ben Kane}
\address{
Department of Mathematics\\ University of Hong Kong\\ Pokfulam\\ Hong Kong}
\email{bkane@maths.hku.hk}
\date{\today}
\thanks{The research of the first author was supported by the Alfried Krupp Prize for Young University Teachers of the Krupp Foundation and by the Deutsche Forschungsgemeinschaft (DFG) Grant No. BR 4082/3-1. The research of the second author was partially supported by a Simons Foundation Collaboration grant (209914 to P. Guerzhoy).  Part of this research was conducted while the third author was a postdoc at the University of Cologne}
\subjclass[2010] {11F37, 11F11}
\keywords{Cycle integrals,  weakly holomorphic modular forms, harmonic weak Maass forms, Shintani lift, Shimura lift}

\begin{abstract}
In this paper, we investigate cycle integrals of weakly holomorphic modular forms.  We show that these integrals coincide with the cycle integrals of classical cusp forms.  We use these results to define a Shintani lift from integral weight weakly holomorphic modular forms to half-integral weight holomorphic modular forms.
\end{abstract}

\maketitle

\section{Introduction and statement of results} 
The relationship between integral and half-integral weight modular forms has been pivotal in a number of applications.  The link between these spaces was first established in a groundbreaking paper of Shimura \cite{Shimura} in which he constructed lifts from half-integral to integral weight modular forms.  The level of the image of Shimura's lifts was then studied by a number of authors.  Niwa \cite{Niwa} used theta lifts to find the optimal level of the lifts in general, while Kohnen \cite{KohnenMathAnn} found a distinguished subspace of half-integral weight modular forms (known as Kohnen's plus space) which map to a lower level.  A number of general results have a much more explicit form when restricted to Kohnen's plus space.  For example, Waldspurger \cite{Waldspurger} used Shimura's lifts to establish connections between central values of $L$-functions of integral weight modular forms and coefficients of corresponding half-integral weight modular forms.  By restricting to Kohnen's plus space, Kohnen and Zagier \cite{KohnenZagier} refined Waldspurger's formula \cite{Waldspurger} to determine an explicit constant which proved non-negativity of the central values of these $L$-functions.  Waldspurger's formula \cite{Waldspurger} (and Kohnen and Zagier's variant \cite{KohnenZagier}) also yields a link between the vanishing of central $L$-values of integral weight modular forms and coefficients of half-integral weight modular forms.  Specifically, for each fundamental discriminant $D$ and integral weight Hecke eigenform $f$, the central value of the $D$th twist of the $L$-function of $f$ vanishes if and only if the $|D|$th coefficient of a half-integral weight modular form related to $f$ by the Shimura correspondence is zero.

More recently, Bruinier and Ono \cite{BruinierOno} proved that the central values of derivatives of $L$-functions vanish precisely when certain coefficients of a newer modular object, known as harmonic weak Maass forms, are algebraic.  

Central to the theory built around integral and half-integral weight modular forms is a two-sided interplay between the Shimura lifts and lifts of Shintani \cite{Shintani} which map integral weight to half-integral weight modular forms.  The Shintani lifts are defined in terms of certain cycle integrals, which are, roughly speaking, integrals of modular objects along geodesics defined by certain integral binary quadratic forms of positive discriminant.  In particular, for a continuous function $F$ satisfying weight $2\wt\in 2\Z$ modularity, the coefficients of the Shintani lifts are given as traces of cycle integrals of the type 
\begin{equation}\label{eqn:cycdef}
\CC\left(F;Q\right):=\int_{C_Q}F(z)Q(z,1)^{\wt-1} dz,
\end{equation}
where $Q$ is an integral binary quadratic form of positive discriminant and $C_Q$ is explicitly defined in Section \ref{sec:setup}.  In \cite{BGK}, the authors 
related two families of 
cycle integrals coming from 
weight $2-2k$
 ($k\in \N$ fixed throughout) harmonic weak Maass forms $\cM$.  
One family is formed with 
the traces of cycle integrals defining the $\delta$th Shintani lift of the weight $2k$ cusp form $\xi_{2-2k}(\cM)$  (where $\xi_{2-2k}:=2iy^{2-2k}\overline{\frac{\partial}{\partial \overline{z}}}$ and $\delta$ is a fundamental discriminant for which $(-1)^k\delta>0$).  These cycle integrals were shown to be 
fundamentally equal to the traces of cycle integrals of a (non-holomorphic) weight $0$ Maass form which is essentially built out of the $(k-1)$th (holomorphic) derivative of $\cM$.   However, if one instead takes $2k-1$ derivatives of $\cM$, then the resulting function is a weight $2k$ \begin{it}weakly holomorphic modular form\end{it} (i.e., a meromorphic modular form whose only possible poles occur at the cusps).  Cycle integrals of weakly holomorphic modular forms are well-defined and were first considered in \cite{DITrational}, where an exciting connection between such cycle integrals and rational period functions was established.  Since $\xi_{2-2k}(\cM)$ and $\DD^{2k-1}(\cM)$ (where $\DD:=\frac{1}{2\pi i} \frac{\partial}{\partial z}$) are both weight $2k$ weakly holomorphic modular forms, it is natural to compare their cycle integrals.  Defining a regularization, which is necessary to guarantee covergence of \eqref{eqn:cycdef} in the case that the discriminant of $Q$ is a square, our first result is that these integrals in essence coincide for every element $\cM$ in the space $\Hcusp$ of harmonic weak Maass forms for which $\xi_{2-2k}(\cM)$ is a cusp form.  
\begin{theorem}\label{thm:cycleequal}
For every harmonic weak Maass form $\cM\in\Hcusp$, every discriminant $D>0$, and every $Q\in\QQ_D$, the cycle integrals satisfy 
\begin{equation}\label{eqn:cycleequal}
\CC\left(\xi_{2-2k}(\cM);Q\right)=-\frac{\left(4\pi\right)^{2k-1}}{(2k-2)!}\overline{\CC\left(\DD^{2k-1}(\cM);Q\right)}.
\end{equation}
\end{theorem}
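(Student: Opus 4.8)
The plan is to use the two facts that pin down our operators on $\cM\in\Hcusp$: the map $\xi_{2-2k}$ kills the holomorphic part of $\cM$ and produces the weight $2k$ cusp form $g:=\xi_{2-2k}(\cM)=2iy^{2-2k}\,\partial_z\overline{\cM}$, while, as recalled above, $\DD^{2k-1}(\cM)=\tfrac{1}{(2\pi i)^{2k-1}}\partial_z^{2k-1}\cM$ is a genuinely weakly holomorphic weight $2k$ form. Thus \eqref{eqn:cycleequal} is an identity between two ordinary cycle integrals of holomorphic objects, and my strategy is to build a bridge between them directly through $\cM$. Conjugating the integral on the right (using $\overline{\int_{C_Q}F\,dz}=\int_{C_Q}\overline{F}\,d\bar z$) and writing $\overline{\DD^{2k-1}(\cM)}=-\tfrac{1}{(2\pi i)^{2k-1}}\,\partial_{\bar z}^{2k-1}\overline{\cM}$, the claim becomes the assertion that
\begin{equation*}
\int_{C_Q}2iy^{2-2k}\bigl(\partial_z\overline{\cM}\bigr)Q(z,1)^{k-1}\,dz=\frac{1}{(2k-2)!}\Bigl(\tfrac{2}{i}\Bigr)^{2k-1}\int_{C_Q}\bigl(\partial_{\bar z}^{2k-1}\overline{\cM}\bigr)\,\overline{Q(z,1)}^{\,k-1}\,d\bar z.
\end{equation*}

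Two relations valid on the geodesic $C_Q$ drive the computation and let me pass between the two sides: since $Q$ has real coefficients, a short computation with the defining equation of $C_Q$ shows that $Q(z,1)\overline{Q(z,1)}=Dy^2$ on $C_Q$ and that the tangent direction satisfies $\overline{Q(z,1)}\,dz=Q(z,1)\,d\bar z$ there. These let me trade $d\bar z$ for $dz$ and convert powers of $\overline{Q(z,1)}$ into powers of $Q(z,1)$ times powers of $y$. I will also use that $\overline{Q(z,1)}=Q(\bar z,1)$ is anti-holomorphic, so that $\partial_z\bigl(\overline{Q(z,1)}^{\,k-1}\bigr)=0$.

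The heart of the argument is to integrate the right-hand integral by parts $2k-1$ times along $C_Q$, stripping the $\partial_{\bar z}$'s off $\overline{\cM}$ and moving them onto $\overline{Q(z,1)}^{\,k-1}$. Writing $d=\partial_z\,dz+\partial_{\bar z}\,d\bar z$ on $C_Q$, each step throws off a boundary term and a cross term carrying $dz$; the cross terms simplify dramatically because $\overline{Q(z,1)}^{\,k-1}$ is anti-holomorphic and because the harmonicity of $\cM$ forces $\partial_z\partial_{\bar z}\cM=\tfrac{i(1-k)}{y}\partial_{\bar z}\cM$, whence inductively $\partial_z^{\,j}\bigl(\partial_{\bar z}\cM\bigr)=\tfrac{P_j}{y^{j}}\partial_{\bar z}\cM$ with $P_j=\bigl(\tfrac{i}{2}\bigr)^{j}\Gamma(j+2-2k)/\Gamma(2-2k)$. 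Because $\partial_z\overline{\cM}=\overline{\partial_{\bar z}\cM}=g/(2iy^{2-2k})$, the mixed derivatives that arise collapse to explicit constant multiples of the holomorphic cusp form $g$, and the $y$-powers cancel exactly against those coming from $P_j$. Since $\overline{Q(z,1)}^{\,k-1}$ has anti-holomorphic degree $2k-2<2k-1$, the term in which all derivatives land on the polynomial vanishes, and after reassembling the surviving pieces with $Q\overline{Q}=Dy^2$ and $\overline{Q}\,dz=Q\,d\bar z$ one is left precisely with $\CC(g;Q)$, the constants combining to the asserted $-\tfrac{(4\pi)^{2k-1}}{(2k-2)!}$.

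The step I expect to be the main obstacle is the control of the boundary terms produced by the integration by parts, in tandem with the square-discriminant case; indeed, already when $k=1$ the entire identity collapses to $\int_{C_Q}d\overline{\cM}=0$, which holds precisely because $\cM$ is $\G_Q$-invariant and $C_Q$ is closed, and this is a faithful model for the general boundary analysis. When $D$ is not a square, $C_Q$ is a closed geodesic realized as a fundamental arc from $z_0$ to $\g_Q z_0$ for the infinite cyclic stabilizer $\G_Q=\langle\g_Q\rangle\subset\PSL_2(\Z)$ of $Q$; individually the iterated derivatives $\partial_{\bar z}^{\,j}\overline{\cM}$ are not modular, but the full bilinear boundary expression produced by integration by parts is $\g_Q$-invariant (using the weight $2-2k$ transformation of $\cM$ together with $Q(\g_Q z,1)=(cz+d)^{-2}Q(z,1)$), so the contributions at $z_0$ and $\g_Q z_0$ agree and cancel. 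When $D$ is a square, $C_Q$ is no longer closed and \eqref{eqn:cycdef} only makes sense after the regularization of Section~\ref{sec:setup}; here one must verify that the integration by parts is compatible with the regularization and that the regularized boundary contributions at the two cusps cancel. Checking this compatibility, and tracking the cross-term constants cleanly through all $2k-1$ steps, is the technical core of the argument.
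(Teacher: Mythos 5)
Your strategy is genuinely different from the paper's, which never integrates by parts along $C_Q$: for non-square $D$ the paper reduces cycle integrals to periods via the weakly holomorphic extension of Kohnen--Zagier (Theorem \ref{thm:cycleperiod}), invokes the existence (Theorem 3.1 of \cite{BFK}) of \emph{one} preimage $\cM$ of a given cusp form whose period polynomials satisfy the analogous relation, and then removes the ambiguity in the preimage by proving $\CC\left(\DD^{2k-1}(G);Q\right)=0$ for weakly holomorphic $G$, using the period relations \eqref{eqn:rrel} together with the combinatorial identity $q_{k,D,\AA}^{(0)}=q_{k,D,\AA}^{(2k-2)}$ of Lemma \ref{lem:qeq}; the square case is handled separately by evaluating both sides as regularized $L$-values. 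For non-square $D$ your direct Stokes argument does appear viable, and is in one respect cleaner, since it treats all $\cM$ uniformly instead of patching a special preimage: your boundary expression is the conjugate of $\sum_{j=0}^{2k-2}(-1)^j\partial_z^{2k-2-j}\cM\cdot\partial_z^{j}\bigl(Q(z,1)^{k-1}\bigr)$, a Rankin--Cohen-type bracket of two weight $2-2k$ objects, and its weight-$0$ invariance under $\g_Q$ survives the non-holomorphy of $\cM$ because only $\partial_z$ occurs and $\partial_z\left[u(\g z)\right]=(cz+d)^{-2}u_z(\g z)$ for any smooth $u$; so the contributions at $z_0$ and $\g_Q z_0$ do cancel, as you claim. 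Your cross-term mechanism is also correct: $\partial_z\partial_{\bar z}^{m}\overline{\cM}=\overline{c_m}\,y^{-m}\,\overline{\partial_{\bar z}\cM}$ with your constants $c_m$, which collapses to an explicit multiple of $g\,y^{2k-2-m}$, and for $k=2$ the resulting kernel is exactly $-\tfrac12 Q(z,1)$ on $C_Q$, yielding the constant $-\tfrac{(4\pi)^{3}}{2!}$. What you do not prove is the general "reassembly" identity, namely that $\sum_{j=0}^{2k-2}(-1)^j\overline{c_{2k-2-j}}\,y^{j}\,\partial_{\bar z}^{j}\bigl(\overline{Q(z,1)}^{\,k-1}\bigr)$ restricts on $C_Q$ to the correct constant times $Q(z,1)^{k-1}$; this is a finite polynomial verification using $Q\overline{Q}=Dy^2$, but it must actually be carried out to pin down $-\tfrac{(4\pi)^{2k-1}}{(2k-2)!}$.

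The genuine gap is the square-discriminant case, which you explicitly defer. There $g_Q$ is the identity and $C_Q=S_Q$ is a complete geodesic joining two cusps, so your cancellation mechanism — matching endpoint contributions at $z_0$ and $\g_Q z_0$ — is simply unavailable: the boundary terms sit at cusps where $\cM$ and the iterated derivatives $\partial_{\bar z}^{j}\overline{\cM}$ have linear exponential growth, and one must show both that the regularized boundary values cancel and that the $e^{uiw}$-regularization of Section \ref{sec:setup} commutes with $2k-1$ integrations by parts (differentiating $e^{uiw}$ produces extra $u$-dependent terms whose behavior as $u\to 0$ needs control). This is not a routine verification; in the paper it is exactly where the analytic content lives, as Proposition \ref{prop:cycleequalsq} evaluates both sides of \eqref{eqn:cycleequal} as regularized twisted $L$-values $L^{*}\bigl(\zeta_{-c}^{\sqrt{D}},k\bigr)$ via Theorems 4.2 and 4.3 of \cite{BFK}, using the explicit representatives $\bigl[0,\sqrt{D},c\bigr]$. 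Until you either carry out that regularized boundary analysis or import those $L$-value identities, your proof covers only non-square $D$, and is incomplete precisely in the case for which the regularization in the statement was introduced.
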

Note that the kernel of $\xi_{2-2k}$ (namely, the space $M_{2-2k}^!$ of weakly holomorphic modular forms) is non-trivial.  In the special case that $\cM\in M_{2-2k}^!$, Theorem \ref{thm:cycleequal} therefore immediately yields the following corollary.
\begin{corollary}\label{cor:weakly}
If $G\in M_{2-2k}^!$, then
$$
\CC\left(\DD^{2k-1}(G);Q\right)=0.
$$
\end{corollary}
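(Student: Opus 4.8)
The plan is to derive the corollary as an immediate specialization of Theorem \ref{thm:cycleequal} to an input that is itself weakly holomorphic; the only point requiring verification is that $M_{2-2k}^!$ lies inside the space $\Hcusp$ to which the theorem applies. To this end I would first check the membership $G\in\Hcusp$. A weakly holomorphic modular form $G$ of weight $2-2k$ is holomorphic on $\H$ with possible poles only at the cusps, hence is a harmonic weak Maass form, and since $\xi_{2-2k}$ annihilates holomorphic functions we have $\xi_{2-2k}(G)=0$. As noted in the excerpt, $M_{2-2k}^!$ is precisely the kernel of $\xi_{2-2k}$. The zero form is trivially a cusp form, so $\xi_{2-2k}(G)$ is a cusp form and therefore $G\in\Hcusp$.

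Next I would substitute $\cM=G$ into \eqref{eqn:cycleequal}. Since $\xi_{2-2k}(G)=0$ and the cycle integral $\CC(\,\cdot\,;Q)$ is linear in its first argument, the left-hand side equals $\CC(0;Q)=0$ for every discriminant $D>0$ and every $Q\in\QQ_D$. Theorem \ref{thm:cycleequal} then gives
$$
0=-\frac{(4\pi)^{2k-1}}{(2k-2)!}\,\overline{\CC\left(\DD^{2k-1}(G);Q\right)}.
$$
Because the constant $(4\pi)^{2k-1}/(2k-2)!$ is nonzero, dividing through and taking complex conjugates yields $\CC\left(\DD^{2k-1}(G);Q\right)=0$, as claimed.

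There is no genuine obstacle here: the entire analytic content has already been absorbed into Theorem \ref{thm:cycleequal}, and the corollary is a formal consequence. The only mild subtlety is confirming that the degenerate input $G$, whose $\xi$-image is identically zero, legitimately lies in the domain $\Hcusp$; this is immediate once one recalls that the weakly holomorphic forms constitute the kernel of $\xi_{2-2k}$ and that the zero function qualifies as a cusp form.
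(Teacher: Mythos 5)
Your proof is correct and essentially identical to the paper's own: the authors observe that $M_{2-2k}^!$ is precisely the kernel of $\xi_{2-2k}$ and deduce the corollary by specializing Theorem \ref{thm:cycleequal} to $\cM=G$, exactly as you do, with your verification that $G\in\Hcusp$ being the same routine check. One point worth knowing: the paper's proof of Theorem \ref{thm:cycleequal} for non-square $D$ itself relies on this corollary (restated as Proposition \ref{prop:weakly} and proved there directly via Theorem \ref{thm:cycleperiod}, the period relations \eqref{eqn:rrel}, and Lemma \ref{lem:qeq}), so your derivation is non-circular only because you take the full theorem as a black box --- which is exactly how the paper presents the corollary.
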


By taking a trace of the cycle integrals, we next define a Shintani lift for $f\in S_{2k}^!$, the space of \begin{it}weakly holomorphic cusp forms\end{it} (i.e., those $f\in M_{2k}^!$ whose constant terms vanish).  For $\delta$ given before Theorem \ref{thm:cycleequal} and $f\in S_{2k}^!$, this Shintani lift is given by 
\begin{equation}\label{eqn:shindef}
\shin_{\delta}(f)(z):= \sum_{\substack{m>0\\ (-1)^km\equiv 0,1\pmod{4}}}  \sum_{Q\in \QQ_{|\delta|m}/\SL_2(\Z) }\ChiQ_{\delta}(Q) \CC\left(f;Q\right) q^m,
\end{equation}
where the genus character $\ChiQ_{\delta}$ is defined in \eqref{eqn:omegadef} and $q:=e^{2\pi iz}$.  We obtain a relation between \eqref{eqn:shindef} and the classical $\delta$th Shintani lift.   If $f\in S_{2k}$, then a theorem of Shintani \cite{Shintani} implies that $\shin_{\delta}(f)$ is an element of 
Kohnen's plus space
 \cite{KohnenMathAnn} $\SS_{k+\frac{1}{2}}$ of weight $k+\frac{1}{2}$ cusp forms.  

We prove that this extends to $S_{2k}^!$ and include the relation between the weight $2k$ Eisenstein series $G_{2k}$ (see \eqref{eqn:Eisdef}) and Cohen's weight $k+\frac{1}{2}$ Eisenstein series $\CoH_k$ (defined in \eqref{eqn:Cohendef}) for completeness.
\begin{theorem}\label{thm:rmodular}
Suppose that $\delta$ is a fundamental discriminant satisfying $(-1)^k\delta>0$.
\noindent

\noindent
\begin{enumerate}
\item
If $f\in S_{2k}^!$, then $\shin_{\delta}(f)\in \SS_{k+\frac{1}{2}}$.
\item
If $\cM\in \Hcusp$, then 
\begin{equation}\label{eqn:shinrel}
\shin_{\delta}\left(\xi_{2-2k}(\cM)\right)=-\frac{\left(4\pi\right)^{2k-1}}{(2k-2)!}\left(\shin_{\delta}\left(\DD^{2k-1}(\cM)\right)\right)^c,
\end{equation}
where for $g\in \SS_{k+\frac{1}{2}}$ we use $g^c(z):=\overline{g (-\overline{z})}$ for any function $g$ on the upper half-plane. 

\item
If $\delta=1$, then we have 
$$
\shin_1\left(G_{2k}(z)\right)= \frac{1}{2}\zeta\left(1-k\right)\CoH_k(z).
$$
\end{enumerate}
\end{theorem}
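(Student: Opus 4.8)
The plan is to treat the three parts in the order (2), (1), (3), since (2) is an immediate consequence of Theorem \ref{thm:cycleequal} and feeds directly into (1), while (3) is an essentially independent explicit computation.

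For part (2), I would simply substitute the pointwise identity \eqref{eqn:cycleequal} into the definition \eqref{eqn:shindef}. Writing $\shin_{\delta}(\DD^{2k-1}(\cM))=\sum_{m}b_m q^m$ with $b_m:=\sum_{Q\in\QQ_{|\delta|m}/\SL_2(\Z)}\chi_{\delta}(Q)\,\CC(\DD^{2k-1}(\cM);Q)$, the two key observations are that the genus character $\chi_{\delta}(Q)\in\{0,\pm1\}$ is real, so it passes unchanged through the complex conjugation in \eqref{eqn:cycleequal}, and that for any $g(z)=\sum_m b_m q^m$ one has $g^c(z)=\overline{g(-\overline{z})}=\sum_m\overline{b_m}\,q^m$. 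Replacing each cycle integral via \eqref{eqn:cycleequal} then yields $\shin_{\delta}(\xi_{2-2k}(\cM))=-\tfrac{(4\pi)^{2k-1}}{(2k-2)!}\bigl(\shin_{\delta}(\DD^{2k-1}(\cM))\bigr)^c$, which is exactly \eqref{eqn:shinrel}; no new convergence issues arise beyond the regularization already built into the individual cycle integrals.

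For part (1), the classical theorem of Shintani already gives $\shin_{\delta}(S_{2k})\subseteq\SS_{k+\frac{1}{2}}$, so only the polar behaviour needs attention. The structural input I would use is a decomposition $S_{2k}^!=\DD^{2k-1}(\Hcusp)+S_{2k}$. Given $f=\sum_n a_n q^n\in S_{2k}^!$ (so $a_0=0$), since $\DD^{2k-1}$ multiplies the $n$th Fourier coefficient by $n^{2k-1}$, I can match the principal part of $f$ by a form $\DD^{2k-1}(\cM)$, where $\cM\in\Hcusp$ is a harmonic weak Maass form whose holomorphic part carries the principal coefficients $a_n/n^{2k-1}$ for $n<0$; existence of such an $\cM$ with cuspidal shadow follows from the standard theory (Poincar\'e series / Bruinier--Funke), and the constant term causes no trouble since $\DD$ annihilates it. Writing $f=\DD^{2k-1}(\cM)+g$ with $g\in S_{2k}$, linearity of $\shin_{\delta}$ splits the claim. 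For $g$ I invoke Shintani directly; for $\DD^{2k-1}(\cM)$ I apply part (2), noting that $\shin_{\delta}(\xi_{2-2k}(\cM))\in\SS_{k+\frac{1}{2}}$ (again by Shintani, as $\xi_{2-2k}(\cM)\in S_{2k}$) and that Kohnen's plus space is preserved by $g\mapsto g^c$, since this operation merely conjugates Fourier coefficients and respects the plus-space congruence condition on the index. Hence $\shin_{\delta}(\DD^{2k-1}(\cM))\in\SS_{k+\frac{1}{2}}$, and adding the two summands gives $\shin_{\delta}(f)\in\SS_{k+\frac{1}{2}}$.

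For part (3), I would compute the trace $\sum_{Q\in\QQ_{m}/\SL_2(\Z)}\CC(G_{2k};Q)$ directly for each admissible $m$ (with $\delta=1$ the genus character is trivial). The natural device is unfolding: expressing $G_{2k}$ as a sum over $\Gamma_\infty\backslash\SL_2(\Z)$ and folding it against the sum over $\SL_2(\Z)$-classes $Q$ of fixed discriminant collapses the trace of cycle integrals to a single computable integral, whose value is a special value of $\zeta$ times the $m$th Cohen number, matching the Fourier expansion of $\CoH_k$ in \eqref{eqn:Cohendef}; the overall constant $\tfrac{1}{2}\zeta(1-k)$ emerges from the constant term / normalization of $G_{2k}$, and the square-discriminant terms are handled by the regularization introduced before Theorem \ref{thm:cycleequal}. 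The \emph{main obstacle} in (1)--(2) is precisely establishing the decomposition $S_{2k}^!=\DD^{2k-1}(\Hcusp)+S_{2k}$, i.e.\ surjectivity of $\DD^{2k-1}$ onto prescribed principal parts with cuspidal shadow, as everything else there is formal given Theorem \ref{thm:cycleequal} and Shintani; in (3) the main effort is the explicit evaluation of the Eisenstein cycle integrals together with careful bookkeeping of the Bernoulli/zeta constants and the regularized square-discriminant contributions, which I expect to be the most calculation-heavy step of the whole theorem.
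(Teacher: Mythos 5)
Your proposal is correct and follows essentially the same route as the paper: part (2) is Theorem \ref{thm:cycleequal} applied termwise (the genus character being real and $g^c$ conjugating Fourier coefficients), and part (1) reduces, via the decomposition $S_{2k}^!=\DD^{2k-1}(\Hcusp)+S_{2k}$ (which the paper obtains from the surjectivity results of \cite{BruinierFunke} and \cite{BGKO} rather than your explicit principal-part/Poincar\'e-series construction), to Shintani's classical theorem together with the fact that $g\mapsto g^c$ preserves $\SS_{k+\frac{1}{2}}$. The only notable difference is in (3), where the paper sidesteps your ``calculation-heavy'' unfolding by quoting Lemma \ref{lem:Eiscycle}, i.e.\ $\CC\left(G_{2k};Q\right)=\frac{1}{2}(-1)^k\zeta_{[Q]}\left(1-k\right)$ from \cite{KohnenZagierRational}, together with Siegel's identity $\sum_{Q\in \QQ_D/\SL_2(\Z)}\zeta_{[Q]}(1-k)=\zeta(1-k)H(k,D)$ and $\ChiQ_1\equiv 1$ --- your proposed unfolding is precisely the computation underlying that lemma, so the two arguments coincide in substance.
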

\begin{remarks}
\noindent

\noindent
\begin{enumerate}
\item If $\delta=1$, then \eqref{eqn:shinrel} holds for all harmonic weak Maass forms.
\item We expect that a twisted version of Theorem \ref{thm:rmodular} (3) holds for general 
fundamental discriminants $\delta$ satisfying $(-1)^k \delta > 0$.
\item One can prove that for $\delta\neq 1$  and $f\in S_{2k}^!$, the Shintani lift $\shin_{\delta}(f)$ is not influenced by the even periods of $f$. That is to say, if $f_1, f_2\in S_{2k}^!$ have the same odd periods, then $\shin_{\delta}\left(f_1\right)=\shin_{\delta}\left(f_2\right)$.  Computational evidence further indicates that in this case $\CC\left(f_1;Q\right)=\CC\left(f_2;Q\right)$ for every $Q\in \QQ_D$ and every $D>0$.  Since we do not need this for our purposes, we omit the proof here.
\item 
In \cite{FM} another geometric extension of the Shintani lift was given, which for Eisenstein series coincides with our definition \eqref{eqn:shindef}. Theorem \ref{thm:rmodular} (3) can also directly be concluded from Section 9 of \cite{FM}.
\item 
The restriction that $\delta$ is fundamental may be omitted by using the action of the Hecke operators to define the Shintani lift for non-fundamental discriminants, but we do not work out the details here.
\end{enumerate}
\end{remarks}

The paper is organized as follows.  In Section \ref{sec:setup}, we introduce the regularization necessary to define the cycle integrals for square discriminants and relate the cycle integrals for non-square discriminants to the periods of weakly holomorphic modular forms.  In Section \ref{sec:mainproofs} we prove Theorem \ref{thm:cycleequal}.  Finally, we prove Theorem \ref{thm:rmodular} in Section \ref{sec:Shintani}.

\section*{Acknowledgements}
The authors thank Jens Funke for helpful discussion.

\section{Cycle integrals and periods}\label{sec:setup}

In this section, we recall known facts on cycle integrals and periods of weakly holomorphic modular forms and additionally prove a relation between them.  The generalization of these definitions to weakly holomorphic modular forms requires a regularized integral defined by the first author, Fricke, and Kent \cite{BFK}.  Assume that $f:\HH\to \C$ is a continuous function and that there exists $c\in \R^+$ such that $f$ satisfies the growth condition 
\begin{equation}\label{bound}
f(z)=O\Big(e^{c \, \mathrm{Im}(z)}\Big)
\end{equation}
uniformly in $\mathrm{Re}(z)$ as  $\text{Im}(z)\to\infty$. Then, for each $z_0 \in \mathbb{H}$, the integral
\[
	\int_{z_0}^{i\infty} e^{uiw} f(w) \; dw
\]
(where the path of integration lies within a  vertical strip) is convergent
for $u \in \C$ with $\mathrm{Re}(u) \gg 0$.  If this integral has an analytic continuation to $u=0$, then the {\it regularized integral} of $f$ is given by
$$
R.\int_{z_0}^{i\infty}f(w) \; dw:=\left[\int_{z_0}^{i\infty}e^{uiw}f(w) \;dw\right]_{u=0}\text{,}
$$
where the right-hand side means that we take the value at $u=0$ of the analytic continuation of the integral. We similarly define integrals at other cusps $\mathfrak a$.  Specifically, suppose that $\mathfrak a=\sigma_{\mathfrak a}(i \infty)$ for a scaling matrix $\sigma_{\mathfrak a} \in\SL_2(\Z)$.  If $f(\sigma_{\mathfrak a} z)$ satisfies \eqref{bound}, then we use the regularization
\[
R.\int_{z_0}^{\mathfrak a}f(w) \; dw:=R.\int_{\sigma_{\mathfrak a}^{-1} z_0}^{i\infty}f \big|_2\sigma_a(w) \; dw.
\]
For cusps $\ca, \cb$, we set
\begin{equation}\label{eqn:regdef}
R.\int_{\ca}^{\cb}f(w) \; dw:=R.\int_{z_0}^{\cb}f(w) \; dw + R.\int_{\ca}^{z_0}f(w) \; dw
\end{equation}
for any $z_0 \in \mathbb H.$  It is not hard to see that this integral is independent of $z_0 \in \mathbb H.$

This regularization can be used to generalize the classical definition of periods of cusp forms.  To be more precise, the \begin{it}$n$th periods\end{it} ($n\in \N_0$) for \begin{it}weakly holomorphic cusp forms\end{it} $f\in S_{2k}^!$ (those weakly holomorphic modular forms whose constant terms vanish) are given by
$$
r_n(f):=R.\int_{0}^{\infty}f(it)t^ndt,
$$
which may be packaged into \begin{it}period polynomials\end{it}
$$
r(f;z):=\sum_{n=0}^{2k-2}i^{-n+1}\binom{2k-2}{n}r_n(f) z^{k-2-n}.
$$
The first two authors, Kent, and Ono (see (1.7) and (1.9) of \cite{BGKO}) proved that the period polynomials of $f(z)=\sum_{n\in\Z} a_n q^n \in S_{2k}^!$ are constant multiples of the error to modularity under $S:=\left(\begin{smallmatrix} 0 &-1\\ 1&0\end{smallmatrix}\right)$ of the \begin{it}(holomorphic) Eichler integrals\end{it} 
$$
\mathcal{E}_f(z):=\sum_{n\in \Z\setminus\{0\}}\frac{a_n}{n^{2k-1}} q^n.
$$
For $t_0>0$, $c\in\Z$, and $d\in \N$, we also require the twisted $L$-series
$$
L_f^\ast \left( \zeta_c^d, s \right):=  \sum_{m\geq m_0} \frac{a_f(m) \zeta_c^{dm}\Gamma (s, 2\pi mt_0)}{(2\pi m)^{s}} + i^k c^{k-2s} \sum_{m\geq m_0} \frac{a_f(m) \zeta_c^{-am}\Gamma \left(k-s, \frac{2\pi m}{c^2 t_0}\right)}{(2\pi m)^{k-s}}.
$$
Here for $y>0$, $\Gamma\left(s,y\right):=\int_{y}^{\infty} t^{s-1} e^{-t}dt$ is the \begin{it}incomplete $\Gamma$-function\end{it}, and the definition can be shown to be independent of the choice of $t_0$.  

Before clarifying the role played by the above regularization if the discriminant is a square, we formally define the cycle integrals given in \eqref{eqn:cycdef}.  For each non-square discriminant $
D
>0$, denote the set of integral binary quadratic forms of discriminant $
D
$ by $\QQ_{
D
}$.  If $
D
$ is not a square, then the (infinite cyclic) group of automorphs of an indefinite binary quadratic form $Q=[a,b,c]\in \QQ_{
D
}$
 is defined by
$$
g_Q:=\left(\begin{matrix}\frac{t+bu}{2}& cu\\ -au& \frac{t-bu}{2}\end{matrix}\right),
$$
where $(t,u)$ is the smallest positive solution to the Pell equation $t^2- 
D
 u^2=4$ (the automorphs are trivial if $
D
$ is a square, and we let $g_Q$ be the identity in this case).  For $a\neq 0$ (resp. $a=0$), let $S_Q$ be the oriented semicircle (resp. vertical line) given by 
$$
a\left|z\right|^2 + bx+c=0
$$
directed counterclockwise if $a>0$, clockwise if $a<0$, and up from the real axis if $a=0$.  Note that $\frac{dz}{Q(z,1)}$ is an invariant measure on $S_Q$.  Since  every $Q\in \QQ_{
D
}$ is modular of weight $-2$ for $\Gamma_Q:=\langle g_Q\rangle$, for $\wt\in \Z$ it is natural to integrate the product of $Q(z,1)^{\wt-1}$ times functions $F$ satisfying weight $2\wt\in 2\Z$ 
modularity
on $\Gamma_Q$ along $\Gamma_Q\backslash S_Q$.  Hence, for $z\in S_Q$, we define
$C_Q$ to be the directed arc from $z$ to $g_Qz$ along $S_Q$ if $
D
$ is non-square and $C_Q:=S_Q$ if $
D
$ is a square.  As in Lemma 6 of \cite{DIT}, one concludes that the cycle integral in \eqref{eqn:cycdef} is well-defined for any continuous function $F$ satisfying weight $2k\in \Z$ modularity on $\Gamma_Q$ for which the integral converges.  

More generally, in the case of convergence, we extend the definition of the cycle integrals in \eqref{eqn:cycdef} by
$$
\CC\left(F;Q\right):=R.\int_{C_Q}F(z)Q(z,1)^{k-1} dz.
$$
In particular, this is necessary for square $
D
$ whenever $F$ does not decay towards the cusps.  Hence this regularization allows one to take cycle integrals of \begin{it}harmonic weak Maass forms.\end{it}  Denoting the weight $2-2k$ \textit{hyperbolic Laplacian} by
\begin{equation*}
\Delta_{2-2k} := -y^2\left( \frac{\partial^2}{\partial x^2}+\frac{\partial^2}{\partial y^2}\right) + i(2-2k) y\left(\frac{\partial}{\partial x}+i \frac{\partial}{\partial y}\right),
\end{equation*}
these are functions $\cM:\H\to\C$ satisfying:
\begin{enumerate}
\item[(i)] $\cM\big|_{2-2k} \gamma = \cM$ for all $\gamma \in \SL_2(\Z)$,
\item[(ii)] $\Delta_{2-2k}\left(\cM\right)=0$,
\item[(iii)] $\cM$ has at most linear exponential growth at $i\infty$.
\end{enumerate}
The subspace of harmonic weak Maass forms $\cM$ for which $\xi_{2-2k}(\cM)$ is a cusp form is denoted by $\Hcusp$.  We aim to compare the cycle integrals of the weakly holomorphic modular forms $\CC\left(\xi_{2-2k}\left(\cM\right);Q\right)$ and $\overline{\CC\left(\DD^{2k-1}\left(\cM\right);Q\right)}$ for $\cM\in \Hcusp$.

Having established the definition of cycle integrals for weakly holomorphic modular forms, we have almost all of the notation required for the definition of the Shintani lift \eqref{eqn:shindef}.  To complete the definition, recall that for every pair of discriminants $D_1$ and $D_2$, the corresponding \begin{it}genus character\end{it} (for example, see pp. 59--62 of \cite{Siegel}) of a binary quadratic form $Q\left(X,Y\right)=\left[a,b,c\right]\left(X,Y\right):=aX^2+bXY+cY^2\in \QQ_{D_1D_2}$ is given by
\begin{equation}\label{eqn:omegadef}
\ChiQ_{D_1}\left(Q\right):=
\begin{cases}
\left(\frac{D_1}{r}\right) & \text{if }\left(a,b,c, D_1\right)=1\text{ and $Q$ represents $r$ with $\left(r,D_1\right)=1$,}\\ 0 & \text{if }\left(a,b,c,D_1\right)>1.
\end{cases}
\end{equation}
Here $\left(\frac{D_1}{\cdot}\right)$ denotes the Kronecker character.  

We make frequent use of a generalization of Theorem 7 of \cite{KohnenZagierRational} to include weakly holomorphic modular forms.  To state the theorem, we require some notation.  A binary quadratic form $Q=[a,b,c]\in \QQ_{D}$ ($D>0$) is called \begin{it}reduced\end{it} if $a>0$, $c>0$, and $b>a+c$.  
Let $\AA$ be an $\text{SL}_2(\Z)$-equivalence class of integral binary quadratic forms of discriminant $D>0$.  We enumerate the reduced forms in $\AA$ by $Q_0,\dots, Q_r=Q_0$, where 
\begin{equation}\label{eqn:reduced}
Q_j=Q_{j-1}\circ M_j:=Q_{j-1}\big|_{-2}M_j,
\end{equation}
with 
\begin{equation}\label{eqn:Mjdef}
M_j:=\left(\begin{matrix} m_j&1\\ -1&0\end{matrix}\right)
\end{equation}
for some integers $m_j\geq 2$.  Here $|_{-2}$ is the usual weight $-2$ slash operator and we use $Q_{j-1}|_{-2}$ to abbreviate the action on the function $Q_{j-1}(z,1)$.  We use the reduced forms to define the polynomial
$$
Q_{k,D,\mathcal{A}}(X):=\sum_{\substack{Q\in \AA\\ Q\text{ reduced}}} Q(X,-1)^{k-1} = \sum_{j=1}^r Q_j\left(X,-1\right)^{k-1}
$$
and denote the coefficient of $X^n$ by $q_{k,D,\AA}^{(n)}$.  We are now ready to give the relation between cycle integrals and periods, which was shown for cusp forms in \cite{KohnenZagierRational}.
\begin{theorem}\label{thm:cycleperiod}
For each equivalence class $\AA$ of binary quadratic forms of non-square discriminant containing $Q\in \AA$ and $f\in S_{2k}^!$, one has
\begin{equation}\label{eqn:cycleperiod}
\CC\left(f;Q\right)=\sum_{n=0}^{2k-2}i^{-n+1}q_{k,D,\AA}^{(n)} r_n(f).
\end{equation}
\end{theorem}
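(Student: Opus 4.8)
The plan is to run the argument of Theorem 7 of \cite{KohnenZagierRational} while replacing every integral by the regularized integral of \cite{BFK}, the genuinely new point being to check that each step survives the regularization. The one identity used repeatedly is that, for $\gamma=\sm{a}{b}{c}{d}\in\SL_2(\Z)$, the weight $2k$ modularity of $f$ combines with $Q(\gamma z,1)=(cz+d)^{-2}\left(Q|_{-2}\gamma\right)(z,1)$ and $d(\gamma z)=(cz+d)^{-2}dz$ to show that the substitution $z=\gamma w$ sends $f(z)Q(z,1)^{k-1}\,dz$ to $f(w)\left(Q|_{-2}\gamma\right)(w,1)^{k-1}\,dw$. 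Setting $A_0:=I$ and $A_j:=M_1\cdots M_j$, iterating \eqref{eqn:reduced} gives $Q_{j}=Q_0|_{-2}A_{j}$, and reduction theory identifies $A_r$ with the fundamental automorph $g_Q$ of $Q=Q_0$. For non-square $D$ the cycle $C_Q$ is compact, so the integrand $f(z)Q_0(z,1)^{k-1}\,dz$ is a holomorphic $1$-form on the simply connected domain $\HH$ and $\CC(f;Q)=\int_{z_0}^{g_Qz_0}f(z)Q_0(z,1)^{k-1}\,dz$ is independent of the path and of $z_0$. Telescoping along $A_0z_0,\dots,A_rz_0$ and substituting $z=A_{j-1}w$ in the $j$-th arc (so that $Q_0|_{-2}A_{j-1}=Q_{j-1}$ and $A_{j-1}^{-1}A_j=M_j$) yields
\begin{equation*}
\CC(f;Q)=\sum_{j=1}^{r}\int_{z_0}^{M_jz_0}f(w)Q_{j-1}(w,1)^{k-1}\,dw.
\end{equation*}

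Next I would route each of these compact arcs through the cusp $i\infty$, writing it as the regularized sum $R.\int_{z_0}^{i\infty}+R.\int_{i\infty}^{M_jz_0}$. In the second piece the substitution $w=M_jw'$ replaces $Q_{j-1}$ by $Q_{j-1}|_{-2}M_j=Q_j$ and, since $M_j^{-1}=\sm{0}{-1}{1}{m_j}$ sends $i\infty$ to $0$, carries the limits to $0\to z_0$, so the $j$-th term becomes
\begin{equation*}
R.\int_{z_0}^{i\infty}f(w)Q_{j-1}(w,1)^{k-1}\,dw+R.\int_{0}^{z_0}f(w)Q_j(w,1)^{k-1}\,dw.
\end{equation*}
Summing over $j$ and using $Q_r=Q_0$, both polynomial sums collapse to $P_{\AA}(w):=\sum_{Q'\in\AA,\,Q'\text{ reduced}}Q'(w,1)^{k-1}$, and the regularized additivity \eqref{eqn:regdef} recombines the two halves into $\CC(f;Q)=R.\int_0^{i\infty}f(w)P_{\AA}(w)\,dw$. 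Writing $P_{\AA}(w)=\sum_n\tilde q^{(n)}w^n$ and parametrizing $w=it$ gives $R.\int_0^{i\infty}f(w)w^n\,dw=i^{n+1}r_n(f)$; finally $Q'(X,-1)=Q'(-X,1)$ shows $q_{k,D,\AA}^{(n)}=(-1)^n\tilde q^{(n)}$, and since $(-1)^ni^{n+1}=i^{-n+1}$ this is exactly \eqref{eqn:cycleperiod}.

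The main obstacle is not this (essentially classical) combinatorics but the regularization bookkeeping that legitimizes it. Splitting a convergent compact arc into the two individually divergent cusp integrals $R.\int_{z_0}^{i\infty}$ and $R.\int_{i\infty}^{M_jz_0}$ is only permitted once one knows that each admits an analytic continuation in the regularizing parameter $u$ (the $e^{uiw}$ in the definition) to $u=0$; this is where the hypothesis $f\in S_{2k}^!$ is essential, since the vanishing of the constant term removes the sole obstruction to that continuation and makes the periods $r_n(f)$ finite and independent of the auxiliary $t_0$ (cf. \cite{BFK, BGKO}). Concretely, I would carry the factor $e^{uiw}$ through the entire computation, verify that the $\SL_2(\Z)$-substitution $w\mapsto M_jw'$ and the concatenation \eqref{eqn:regdef} are compatible with taking the analytic continuation, and only set $u=0$ at the very end. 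Establishing these compatibilities---that the regularization of \cite{BFK} is preserved under the changes of variable and finite path decompositions above---is the crux; granting them, the identity reduces to the formal manipulation just sketched.
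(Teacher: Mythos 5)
Your proposal is correct and takes essentially the same route as the paper: both run Kohnen--Zagier's Theorem 7 through the regularization of \cite{BFK}, using the identical telescoping decomposition $\CC(f;Q)=\sum_{j=1}^{r}\int_{z_0}^{M_jz_0}f(z)Q_{j-1}(z,1)^{k-1}\,dz$ and arriving at $R.\int_0^{i\infty}f(z)\sum_{j}Q_{j-1}(z,1)^{k-1}\,dz$, which is matched against the periods $r_n(f)$ exactly as in \eqref{eqn:rnregThm7}. The only (cosmetic) difference is the bridge between the compact arcs and the cusp-to-cusp regularized integral: you split each arc through $i\infty$, change variables by $M_j$, and reassemble via \eqref{eqn:regdef}, while the paper inserts the regularizing factor $e^{uiw}$ on the compact arcs (harmless by continuity at $u=0$, see \eqref{eqn:regThm7}), uses independence of $z_0$, and lets $z_0\to 0$ (so that the endpoints tend to $0$ and $M_j\cdot 0=i\infty$) --- the compatibility checks you flag as the crux are precisely the analytic content of this limiting step and are supplied by the framework of \cite{BFK} together with the vanishing constant term of $f\in S_{2k}^!$, just as you say.
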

\begin{proof}
We rewrite the right-hand side of (\ref{eqn:cycleperiod}), using the definition of $r_n$
\begin{multline}\label{eqn:rnregThm7}
\sum_{n=0}^{2k-2}i^{-n+1}q_{k,D,\AA}^{(n)} r_n(f)=\sum_{n=0}^{2k-2}(-1)^n q_{k,D,\AA}^{(n)}R.\int_0^{i\infty}f(z)z^n dz\\
= R.\int_0^{i\infty}f(z)Q_{k,D,\AA}(-z)dz=R.\sum_{j=1}^{r}\int_0^{i\infty}f(z) Q_{j-1}(z,1)^{k-1}dz.
\end{multline}

On the left-hand side of \eqref{eqn:cycleperiod}, we follow the formal argument at the beginning of the proof in \cite{KohnenZagierRational} mutatis mutandis to obtain 
$$
\CC\left(f;Q\right)=\sum_{j=1}^{r}\int_{z_0}^{M_jz_0}f(z)Q_{j-1}(z,1)^{k-1}dz.
$$
However, for every $z_0\in \HH$, continuity in $u$ implies that
\begin{equation}\label{eqn:regThm7}
\left[\sum_{j=1}^{r}\int_{z_0}^{M_jz_0}f(z)e^{uiw}Q_{j-1}(z,1)^{k-1}dz\right]_{u=0}=\sum_{j=1}^{r}\int_{z_0}^{M_jz_0}f(z)Q_{j-1}(z,1)^{k-1}dz.
\end{equation}
Since $\CC\left(f;Q\right)$ is independent of $z_0$, the left-hand side of \eqref{eqn:regThm7} is hence also independent.  We then take $z_0\to 0$, precisely yielding the right-hand side of \eqref{eqn:rnregThm7}.
\end{proof}

\section{Proof of Theorem \ref{thm:cycleequal}}\label{sec:mainproofs}
The argument to prove Theorem \ref{thm:cycleequal} naturally separates for $D$ square and non-square.  We begin with the case that $D$ is not a square and first prove the existence of $\cM\in \Hcusp$ for which \eqref{eqn:cycleequal} holds.  

\begin{proposition}\label{prop:cycleequalnonsq}
For every $g\in S_{2k}$, there exists $\cM\in \Hcusp$ for which $\xi_{2-2k}(\cM)=g$ such that for every non-square discriminant $D$ and every $Q\in \QQ_D$, \eqref{eqn:cycleequal} holds.
\end{proposition}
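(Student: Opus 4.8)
The plan is to reduce \eqref{eqn:cycleequal} to a comparison of period polynomials and then to establish that comparison from the structure of the harmonic weak Maass form $\cM$. First I would fix a preimage: since $\xi_{2-2k}\colon\Hcusp\to S_{2k}$ is surjective, choose $\cM\in\Hcusp$ with $\xi_{2-2k}(\cM)=g$, and note that both $g\in S_{2k}\subseteq S_{2k}^!$ and $\DD^{2k-1}(\cM)\in S_{2k}^!$ (the latter is weakly holomorphic of weight $2k$ with vanishing constant term, since $\DD^{2k-1}$ kills the constant term of $\cM$). For non-square $D$ and $Q\in\QQ_D$, Theorem \ref{thm:cycleperiod} rewrites both cycle integrals in \eqref{eqn:cycleequal} in terms of the periods $r_n$, with coefficients built from the real numbers $q_{k,D,\AA}^{(n)}$ and powers of $i$. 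Since these coefficients arise from the cocycle attached to the cycle of reduced forms in $\AA$, they annihilate coboundaries; it therefore suffices to prove that $r\big(\DD^{2k-1}(\cM);z\big)$ agrees with $r(g;z)$ up to complex conjugation, the explicit factor $-\frac{(4\pi)^{2k-1}}{(2k-2)!}$, and a coboundary (a multiple of $z^{2k-2}-1$).

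The core is this comparison, which I would derive from the decomposition $\cM=\cM^++\cM^-$ into its holomorphic and non-holomorphic parts. Because $\DD^{2k-1}$ multiplies the $n$-th Fourier coefficient of $\cM^+$ by $n^{2k-1}$, the holomorphic Eichler integral of $\DD^{2k-1}(\cM^+)$ returns $\cM^+$ up to its constant term; by Bol's identity $\DD^{2k-1}(\cM^-)$ is a constant multiple of the cusp form $g^c$, contributing a constant multiple of $\mathcal{E}_{g^c}$ to the Eichler integral of $\DD^{2k-1}(\cM)$. By \cite{BGKO}, $r\big(\DD^{2k-1}(\cM);z\big)$ is a constant multiple of the error to modularity under $S$ of this Eichler integral, which I would evaluate in two pieces. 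On the one hand, modularity of $\cM$ gives $\cM^+|_{2-2k}S-\cM^+=-(\cM^-|_{2-2k}S-\cM^-)$, and since $\cM\in\Hcusp$ has cuspidal shadow $g$ the non-holomorphic part $\cM^-$ is the non-holomorphic Eichler integral of $g$ (in particular it has no $y^{2k-1}$ term), so its error to modularity is a scalar multiple of a conjugate of $r(g;z)$. On the other hand, the $\mathcal{E}_{g^c}$ piece contributes the period polynomial of $g^c$, which is again a conjugate of $r(g;z)$, while the constant term of $\cM^+$ contributes only a coboundary proportional to $z^{2k-2}-1$. Collecting the two cuspidal contributions yields that $r\big(\DD^{2k-1}(\cM);z\big)$ equals the required conjugate multiple of $r(g;z)$ modulo a coboundary.

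The main obstacle will be the precise bookkeeping of the three scalars entering this chain—the proportionality constant from \cite{BGKO}, the constant from Bol's identity relating $\DD^{2k-1}(\cM^-)$ to $g^c$, and the constant relating the error to modularity of the non-holomorphic Eichler integral to $r(g;z)$—together with the reflection $z\mapsto-\overline{z}$ and the complex conjugation that enter through the conjugated coefficients $\overline{b(n)}$ of $\cM^-$ and through $g^c$, so that the two contributions combine to the comparison with exactly the constant $-\frac{(4\pi)^{2k-1}}{(2k-2)!}$. A secondary point is the cohomological fact that coboundaries proportional to $z^{2k-2}-1$ pair trivially with the $q_{k,D,\AA}^{(n)}$ and so do not contribute to the cycle integrals; this disposes of the constant term of $\cM^+$, and it is also what makes the statement an existence rather than a uniqueness assertion, since replacing $\cM$ by another preimage of $g$ alters $\DD^{2k-1}(\cM)$ only by $\DD^{2k-1}(G)$ for some $G\in M_{2-2k}^!$, whose period polynomial one computes from \cite{BGKO} and the modularity of $G$ to be such a coboundary.
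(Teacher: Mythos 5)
Your outline fails at its central analytic step. The claim ``by Bol's identity $\DD^{2k-1}(\cM^-)$ is a constant multiple of the cusp form $g^c$'' is false: for $\cM\in\Hcusp$ the non-holomorphic part is (up to a constant) the non-holomorphic Eichler integral $\int_{-\overline{z}}^{i\infty}\overline{g(-\overline{\tau})}\,(\tau+z)^{2k-2}\,d\tau$, equivalently a combination of the terms $\Gamma(2k-1,4\pi n y)q^{-n}$, and $\DD^{2k-1}$ \emph{annihilates} it — the first $2k-2$ applications of $\partial_z$ reduce the polynomial kernel $(\tau+z)^{2k-2}$ to a constant, and the last one gives zero because the lower limit $-\overline{z}$ is antiholomorphic. (It is $\xi_{2-2k}$, not $\DD^{2k-1}$, that recovers the shadow $g$ from $\cM^-$; the only piece of $\cM^-$ surviving $\DD^{2k-1}$ is the $y^{2k-1}$ term, absent here since the shadow is cuspidal.) Consequently $\DD^{2k-1}(\cM)=\DD^{2k-1}(\cM^+)$, its Eichler integral is $\cM^+-c^+(0)$, and there is no $\mathcal{E}_{g^c}$ piece at all: the unique cuspidal contribution to the period polynomial of $\DD^{2k-1}(\cM)$ arises from transferring the error to modularity from $\cM^+$ to $\cM^-$ via $\cM^+\big|_{2-2k}S-\cM^+=-\bigl(\cM^-\big|_{2-2k}S-\cM^-\bigr)$. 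Your plan of ``collecting the two cuspidal contributions'' double counts, and since you explicitly defer all constant bookkeeping to exactly this step, the argument as written cannot be trusted to produce the factor $-\frac{(4\pi)^{2k-1}}{(2k-2)!}$; carried out honestly it would instead reveal that one of your two contributions vanishes.

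Two further remarks on how this compares with the paper. First, once the phantom $g^c$-term is removed, your corrected computation would essentially be reproving the input the paper simply cites: the paper's proof chooses $\cM$ via the second statement of Theorem 3.1 of \cite{BFK}, which already furnishes the \emph{exact} identity $r\left(\xi_{2-2k}(\cM);z\right)=-\frac{(4\pi)^{2k-1}}{(2k-2)!}\overline{r\left(\DD^{2k-1}(\cM);z\right)}$ with no coboundary ambiguity, and then concludes by comparing coefficients and applying Theorem \ref{thm:cycleperiod} — note the proposition only asserts existence of one good preimage, so fixing an arbitrary $\cM$ and fighting the coboundary is extra work. Second, your assertion that the coefficients $q_{k,D,\AA}^{(n)}$ annihilate the coboundary $z^{2k-2}-1$ is true but not free: in the pairing of Theorem \ref{thm:cycleperiod} it amounts precisely to $q_{k,D,\AA}^{(0)}=q_{k,D,\AA}^{(2k-2)}$ together with the period relations \eqref{eqn:rrel}, i.e.\ to Lemma \ref{lem:qeq} and the computation in the proof of Proposition \ref{prop:weakly}, which the paper establishes by a nontrivial manipulation of the cycle of reduced forms; in any complete write-up you would need to supply that argument (or the equivalent homological cycle--coboundary pairing computation) rather than assert it.
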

\begin{proof}
We may choose $\cM$ as in the second statement of Theorem 3.1 of \cite{BFK}, namely satisfying 
$$
r\left(\xi_{2-2k}\left(\cM\right);z\right)=-\frac{\left(4\pi\right)^{2k-1}}{(2k-2)!}\overline{r\left(\DD^{2k-1}\left(\cM\right);z\right)}.
$$
By comparing coefficients of $z^n$, one obtains
$$
r_n\left(\xi_{2-2k}\left(\cM\right)\right)=(-1)^n\frac{\left(4\pi\right)^{2k-1}}{(2k-2)!}\overline{r_n\left(\DD^{2k-1}\left(\cM\right)\right)}.
$$
For every $Q\in \AA$, we then use Theorem \ref{thm:cycleperiod} to yield 
\begin{multline*}
\CC\left(\xi_{2-2k}\left(\cM\right);Q\right) =\sum_{n=0}^{2k-2}i^{-n+1}q_{k,D,\AA}^{(n)} r_n\left(\xi_{2-2k}\left(\cM\right)\right)\\
 = \frac{\left(4\pi\right)^{2k-1}}{(2k-2)!}\sum_{n=0}^{2k-2}i^{n+1} q_{k,D,\AA}^{(n)} \overline{r_n\left(\DD^{2k-1}\left(\cM\right)\right)}\\
=-\frac{\left(4\pi\right)^{2k-1}}{(2k-2)!}\overline{\sum_{n=0}^{2k-2}i^{-n+1} q_{k,D,\AA}^{(n)} r_n\left(\DD^{2k-1}\left(\cM\right)\right)}=-\frac{\left(4\pi\right)^{2k-1}}{(2k-2)!}\overline{\CC\left(\DD^{2k-1}\left(\cM\right);Q\right)}.
\end{multline*}
This gives the claim.  
\end{proof}
Proposition \ref{prop:cycleequalnonsq} shows that \eqref{eqn:cycleequal} holds for at least one $\cM\in \Hcusp$ which maps to $g\in S_{2k}$ under $\xi_{2-2k}$ and the difference of any two such elements of $\Hcusp$ is weakly holomorphic.  To show the non-square case it hence remains to prove Corollary \ref{cor:weakly} for $D$ non-square, which we now state as a separate proposition. 
\begin{proposition}\label{prop:weakly}
If $G\in M_{2-2k}^!$ and $Q\in \QQ_D$ with $D$ non-square, then
$$
\CC\left(\DD^{2k-1}(G);Q\right)=0.
$$
\end{proposition}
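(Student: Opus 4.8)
The plan is to reduce everything to periods via Theorem~\ref{thm:cycleperiod} and to exploit that $f:=\DD^{2k-1}(G)$ has an essentially trivial period polynomial. By Bol's identity $f$ is a weight $2k$ weakly holomorphic modular form, and since $\DD^{2k-1}$ multiplies the $n$-th Fourier coefficient by $n^{2k-1}$, the constant term of $f$ vanishes; hence $f\in S_{2k}^!$ and Theorem~\ref{thm:cycleperiod} applies. Writing $G=\sum_n c_n q^n$, the holomorphic Eichler integral of $f$ is $\mathcal{E}_f=\sum_{n\neq 0}c_n q^n=G-c_0$. Since $G|_{2-2k}S=G$, the only contribution to the error to modularity of $\mathcal{E}_f$ under $S$ comes from the constant term, giving $\mathcal{E}_f|_{2-2k}S-\mathcal{E}_f=c_0\left(1-z^{2k-2}\right)$. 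By the result of \cite{BGKO} quoted above, $r(f;z)$ is a constant multiple of this, so it is proportional to $z^{2k-2}-1$.

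First I would use this to kill all but two of the periods: comparing coefficients of $z^n$ in $r(f;z)$ gives $r_n(f)=0$ for $1\le n\le 2k-3$, leaving only $r_0(f)$ and $r_{2k-2}(f)$, which are proportional to $c_0$. Because $\binom{2k-2}{0}=\binom{2k-2}{2k-2}=1$, the factors $i^{-n+1}$ multiplying $r_0(f)$ and $r_{2k-2}(f)$ in \eqref{eqn:cycleperiod} are exactly those appearing in $r(f;z)$, and since $r(f;z)\propto z^{2k-2}-1$ these two terms carry opposite coefficients. Thus Theorem~\ref{thm:cycleperiod} collapses to
\[
\CC\left(f;Q\right)=\alpha\left(q_{k,D,\AA}^{(0)}-q_{k,D,\AA}^{(2k-2)}\right)
\]
for a constant $\alpha$ proportional to $c_0$, where $\AA$ is the class containing $Q$. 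Hence it suffices to show that the constant term and the leading coefficient of $Q_{k,D,\AA}(X)$ coincide.

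The heart of the argument is this last symmetry, which I expect to be the main point. Writing the reduced forms as $Q_j=[a_j,b_j,c_j]$, the leading coefficient of $Q_{k,D,\AA}$ is $\sum_{j=1}^{r}a_j^{k-1}$ and the constant term is $\sum_{j=1}^{r}c_j^{k-1}$. From the recursion $Q_j=Q_{j-1}\circ M_j$ with $M_j=\sm{m_j}{1}{-1}{0}$, evaluating $Q_{j-1}(m_jX+Y,-X)$ shows that the last coefficient of $Q_j$ equals the first coefficient of $Q_{j-1}$, i.e. $c_j=a_{j-1}$. Therefore $\sum_{j=1}^{r}c_j^{k-1}=\sum_{j=1}^{r}a_{j-1}^{k-1}=\sum_{j=1}^{r}a_j^{k-1}$, the last equality because the cycle closes up ($Q_r=Q_0$, so $a_r=a_0$). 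This gives $q_{k,D,\AA}^{(0)}=q_{k,D,\AA}^{(2k-2)}$ and hence $\CC\left(\DD^{2k-1}(G);Q\right)=0$. The only delicate points are tracking the constants relating $r_0(f)$ and $r_{2k-2}(f)$ and verifying the coefficient recursion $c_j=a_{j-1}$; the latter is the crux but is a one-line reduction-theory computation.
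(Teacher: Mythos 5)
Your proof is correct, and its skeleton is the same as the paper's: apply Theorem~\ref{thm:cycleperiod}, observe that only $r_0(f)$ and $r_{2k-2}(f)$ survive and enter with opposite signs, so that $\CC(f;Q)=\alpha\bigl(q_{k,D,\AA}^{(0)}-q_{k,D,\AA}^{(2k-2)}\bigr)$, and then kill this by the symmetry of $Q_{k,D,\AA}$. The two ingredients, however, are established along genuinely different lines. For the period information, the paper asserts $r_n(f)=0$ for $0<n<2k-2$ and then invokes the general relation \eqref{eqn:rrel} at $n=0$ to relate $r_0(f)$ and $r_{2k-2}(f)$; you instead compute the period polynomial outright, noting $\mathcal{E}_f=G-c_0$ and hence $r(f;z)\propto \mathcal{E}_f\big|_{2-2k}S-\mathcal{E}_f=c_0\left(1-z^{2k-2}\right)$, which yields both the vanishing of the middle periods and the sign relation in one self-contained stroke (your reading of the exponent in $r(f;z)$ as $2k-2-n$ is the intended one; the paper's $z^{k-2-n}$ is a misprint). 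For the symmetry $q_{k,D,\AA}^{(0)}=q_{k,D,\AA}^{(2k-2)}$, which the paper isolates as Lemma~\ref{lem:qeq} and proves by slashing $Q_{k,D,\AA}$ with $\left(\begin{smallmatrix}0&1\\ 1&0\end{smallmatrix}\right)$ and re-summing over the cycle, your argument via the coefficient recursion $c_j=a_{j-1}$ (read off from $Q_j(X,Y)=Q_{j-1}\left(m_jX+Y,-X\right)$) together with the cyclicity $Q_r=Q_0$ is shorter and cleaner; it proves exactly the same identity while avoiding the slash manipulation, where the paper's write-up in fact has the labels of $q^{(0)}$ and $q^{(2k-2)}$ interchanged relative to the direct expansion of $Q_j(X,-1)^{k-1}$ (harmless there, since only the equality is used, but your route sidesteps it entirely). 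In short: same decomposition of the problem, a more self-contained derivation of the period polynomial, and a more elementary proof of the key lemma.
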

Before proving Proposition \ref{prop:weakly}, we show a useful lemma.
\begin{lemma}\label{lem:qeq}
For every $k\geq 2$ and discriminant $D>0$ we have
\begin{equation}\label{eqn:qkeq}
q_{k,D,\mathcal{A}}^{(0)}=q_{k,D,\mathcal{A}}^{(2k-2)}.
\end{equation}
\end{lemma}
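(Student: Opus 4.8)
The plan is to read off the two coefficients $q_{k,D,\mathcal{A}}^{(0)}$ and $q_{k,D,\mathcal{A}}^{(2k-2)}$ directly from the reduced forms and then exploit the cyclic structure of the reduction cycle. Writing each reduced form as $Q_j=[a_j,b_j,c_j]$ for $1\le j\le r$, we have $Q_j(X,-1)=a_jX^2-b_jX+c_j$, so the constant term of $Q_j(X,-1)^{k-1}$ is $c_j^{k-1}$ and its leading coefficient is $a_j^{k-1}$. Summing over $j$ in the definition of $Q_{k,D,\mathcal{A}}(X)$ gives
$$
q_{k,D,\mathcal{A}}^{(0)}=\sum_{j=1}^r c_j^{k-1},\qquad q_{k,D,\mathcal{A}}^{(2k-2)}=\sum_{j=1}^r a_j^{k-1}.
$$
Thus the lemma is equivalent to the assertion that these two power sums over the reduced forms in $\mathcal{A}$ coincide.

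The heart of the argument is to trace through a single reduction step $Q_j=Q_{j-1}\circ M_j = Q_{j-1}\big|_{-2}M_j$ with $M_j$ as in \eqref{eqn:Mjdef}. First I would spell out the weight $-2$ slash as the substitution $(Q_{j-1}\big|_{-2}M_j)(X,Y)=Q_{j-1}(m_jX+Y,-X)$, using homogeneity of $Q_{j-1}$. Expanding this and collecting the coefficient of $Y^2$ shows that the constant coefficient of $Q_j$ equals the leading coefficient of its predecessor, namely
$$
c_j=a_{j-1}.
$$
The precise expressions for $a_j$ and $b_j$ are irrelevant; only this one identity is needed.

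Given $c_j=a_{j-1}$, the first power sum becomes $\sum_{j=1}^r a_{j-1}^{k-1}=\sum_{i=0}^{r-1}a_i^{k-1}$, while the second is $\sum_{i=1}^r a_i^{k-1}$. Since the reduced forms are enumerated cyclically with $Q_0=Q_r$, we have $a_0=a_r$, and hence the two sums agree after re-indexing; this proves \eqref{eqn:qkeq}. I expect the only genuine obstacle to be the bookkeeping in the reduction step: one must fix the sign and index conventions for the action of $M_j$ under $\big|_{-2}$ so that the relation $c_j=a_{j-1}$ emerges in exactly this form. Once that identity is verified, the conclusion follows from a one-line cyclic re-indexing, and the hypothesis $k\ge 2$ enters only to ensure the exponent $k-1$ is a positive integer so that the coefficient extraction above is meaningful.
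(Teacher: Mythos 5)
Your proof is correct, and it is a genuine streamlining of the paper's argument rather than a reproduction of it. Both proofs run on the same fuel---the reduction relation $Q_j=Q_{j-1}\circ M_j$ together with the cyclic enumeration $Q_0=Q_r$---but the paper deploys it globally: it computes the leading coefficient of $Q_{k,D,\mathcal{A}}\big|_{2-2k}\sm{0}{1}{1}{0}$ in two ways, once by coefficient reversal (giving $\sum_{j} c_j^{k-1}$) and once by pushing the matrices $M_j$ through the slash action and discarding the resulting translations, which do not affect the leading coefficient (giving $\sum_{j} a_j^{k-1}$). You instead extract from a single reduction step the local identity $c_j=a_{j-1}$; your bookkeeping checks out against the paper's conventions, since $\det M_j=1$ gives $Q_j(X,Y)=Q_{j-1}(m_jX+Y,-X)$ and hence $c_j=Q_j(0,1)=Q_{j-1}(1,0)=a_{j-1}$, after which the one-line cyclic re-indexing (using $a_0=a_r$) finishes the proof. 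What each approach buys: yours is shorter and makes the mechanism completely transparent, reducing the lemma to the observation that the outer coefficients of the reduced forms rotate around the cycle; the paper's version keeps the manipulation at the level of the full polynomial $Q_{k,D,\mathcal{A}}$ and its behavior under the weight-$(2-2k)$ action, which fits the period-polynomial formalism used elsewhere in the paper (notably in Theorem \ref{thm:cycleperiod}). One small point in your favor: your identifications $q_{k,D,\mathcal{A}}^{(0)}=\sum_j c_j^{k-1}$ and $q_{k,D,\mathcal{A}}^{(2k-2)}=\sum_j a_j^{k-1}$ are the literal ones dictated by the definition of $Q_{k,D,\mathcal{A}}(X)$, whereas the corresponding labels displayed in the paper's proof are interchanged---harmless there, since the asserted identity is symmetric in the two coefficients, but your version is the cleaner statement.
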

\begin{proof}
Denote the reduced forms $Q_j=\left[a_j,b_j,c_j\right]$ as in \eqref{eqn:reduced}.  The leading term of 
$$
Q_{k,D}\Big|_{2-2k}\left(\begin{matrix}0&1\\ 1&0\end{matrix}\right)(X) = \sum_{j=1}^r Q_j(1,-X)^{k-1}
$$
is then
\begin{equation}\label{eqn:leadingcoeff}
\sum_{j=1}^{r} c_j^{k-1}= q_{k,D,\AA}^{(2k-2)}.
\end{equation}
However, since $Q_j=Q_{j-1}\Big|_{-2}M_j=Q_{j-1}\circ M_j$ by \eqref{eqn:reduced} (where $M_j$ is defined in \eqref{eqn:Mjdef}), we have 
\begin{multline*}
Q_{k,D}\Big|_{2-2k}\left(\begin{matrix}0&1\\ 1&0\end{matrix}\right)(X)= \sum_{j=1}^{r}Q_{j-1}\circ\left(\begin{matrix}m_j & 1\\ -1 &0\end{matrix}\right)\left(\begin{matrix}1 & 0\\ 0&-1\end{matrix}\right)\left(\begin{matrix}0 & 1\\ 1&0 \end{matrix}\right)(X,1)^{k-1}\\
= \sum_{j=1}^{r}Q_{j-1}\circ\left(\begin{matrix}-1& m_j \\ 0&-1\end{matrix}\right)(X,1)^{k-1}=\sum_{j=1}^{r}Q_{j-1}\left(m_j-X,-1\right)^{k-1}\\
 =\sum_{j=1}^{r}Q_{j}\left(X-m_{j+1},1\right)^{k-1}.
\end{multline*}
Since translation does not change the leading coefficient, we conclude that the leading coefficient is 
$$
\sum_{j=1}^{r} a_{j}^{k-1}=q_{k,D,\AA}^{(0)}.
$$
Comparing with \eqref{eqn:leadingcoeff} completes the proof of the lemma.
\end{proof}
\begin{proof}[Proof of Proposition \ref{prop:weakly}]
For weakly holomorphic forms $f\in \DD^{2k-1}\left(M_{2-2k}^!\right)$, one may greatly simplify \eqref{eqn:cycleperiod}.  In this case, $r_n(f)=0$ for $0<n<2k-2$.   Moreover, it is known \cite{BGKO,KohnenZagierRational} that for every $f\in S_{2k}^!$ one has the relations
\begin{equation}\label{eqn:rrel}
i^{-n+1}r_{n}(f)=(-1)^{n+1} i^{-(2k-2-n)+1}r_{2k-2-n}(f).
\end{equation}
Hence in particular
$$
i^{1}r_{0}(f)= -i^{-(2k-2)+1}r_{2k-2}(f).
$$
Plugging this in, \eqref{eqn:cycleperiod} becomes
$$
\CC\left(f;Q\right)=i r_{0}(f)\left(q_{k,D,\AA}^{(0)}-q_{k,D,\AA}^{(2k-2)}\right).
$$
We finally use Lemma \ref{lem:qeq} to obtain $\CC\left(f;Q\right)=0$.

\end{proof}

We next consider the square case.
\begin{proposition}\label{prop:cycleequalsq}
If $D$ is a square, then \eqref{eqn:cycleequal} holds for every $\cM\in \Hcusp$ and $Q\in \QQ_{D}$.
\end{proposition}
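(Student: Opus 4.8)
The plan is to reduce the square case to the same period computation that settled the non-square case, the only genuinely new ingredient being an analogue of Theorem \ref{thm:cycleperiod} valid when $D$ is a square. By the $\SL_2(\Z)$-invariance of the cycle integral, every class in $\QQ_D/\SL_2(\Z)$ contains a representative $Q=[0,n,c]$ with $n^2=D$: a binary form of square discriminant represents $0$ nontrivially (its rational root clears to a primitive integral vector), so after an $\SL_2(\Z)$-transformation its leading coefficient vanishes and $b^2=n^2$ forces $b=n$ up to orientation. For such $Q$ the geodesic $S_Q$ is the vertical line through $-c/n$, running from the cusp $-c/n$ up to $i\infty$, and $Q(z,1)^{k-1}=(nz+c)^{k-1}$. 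Thus for $f\in S_{2k}^!$ the cycle integral is the regularized integral $\CC(f;Q)=R.\int_{-c/n}^{i\infty}f(z)(nz+c)^{k-1}\,dz$ between two cusps.

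First I would evaluate this regularized integral explicitly. Expanding $(nz+c)^{k-1}$ binomially and splitting the path at an interior point according to \eqref{eqn:regdef}, the piece based at $i\infty$ is controlled directly by the growth condition \eqref{bound}, while the piece based at the finite cusp $-c/n$ is treated by conjugating with a scaling matrix and applying the regularization there. Carrying out the analytic continuation in $u$, the Fourier expansion of $f$ turns these integrals into the incomplete $\Gamma$-values assembled in the twisted $L$-series $L_f^\ast(\zeta_c^d,s)$ at integer arguments, and the $s\leftrightarrow k-s$ symmetry built into its two terms is precisely the functional equation needed to recombine them. The outcome I expect is a square-discriminant analogue of \eqref{eqn:cycleperiod}, expressing $\CC(f;Q)$ as a $\C$-linear combination $\sum_{n=0}^{2k-2}i^{-n+1}\widetilde{q}^{\,(n)}r_n(f)$ with real class-dependent coefficients $\widetilde{q}^{\,(n)}$, together with the analogue $\widetilde{q}^{\,(0)}=\widetilde{q}^{\,(2k-2)}$ of Lemma \ref{lem:qeq}. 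Establishing this explicit evaluation, and in particular the existence of the analytic continuation at the finite cusp, is the main obstacle.

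With this period expression in hand the proposition follows as in the non-square case. Writing $\cM=\cM_0+G$ with $\cM_0$ the distinguished preimage of $\xi_{2-2k}(\cM)$ from Proposition \ref{prop:cycleequalnonsq} and $G\in M_{2-2k}^!$, I would first invoke the period identity $r_n(\xi_{2-2k}(\cM_0))=(-1)^n\frac{(4\pi)^{2k-1}}{(2k-2)!}\overline{r_n(\DD^{2k-1}(\cM_0))}$; substituting it into the linear combination above and conjugating term by term, the factor $i^{-n+1}(-1)^n=\overline{i^{-n+1}}$ converts the sum into $-\frac{(4\pi)^{2k-1}}{(2k-2)!}\overline{\CC(\DD^{2k-1}(\cM_0);Q)}$, giving \eqref{eqn:cycleequal} for $\cM_0$. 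It then remains to prove the square analogue of Corollary \ref{cor:weakly}, namely $\CC(\DD^{2k-1}(G);Q)=0$: here the periods $r_n(\DD^{2k-1}(G))$ vanish for $0<n<2k-2$, while \eqref{eqn:rrel} forces $i\,r_0=-i^{-(2k-2)+1}r_{2k-2}$, so the linear combination collapses to $i\,r_0(\DD^{2k-1}(G))\big(\widetilde{q}^{\,(0)}-\widetilde{q}^{\,(2k-2)}\big)=0$ by the coefficient symmetry noted above. Combining the two contributions yields \eqref{eqn:cycleequal} for every $\cM\in\Hcusp$ and every $Q\in\QQ_D$.
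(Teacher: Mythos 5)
Your overall architecture (reduce to representatives $[0,\sqrt{D},c]$, so that $\CC(f;Q)$ becomes a regularized integral from the cusp $-c/\sqrt{D}$ to $i\infty$) starts out exactly as in the paper, but from there the paper does something different and, crucially, complete: it quotes Theorems 4.2 and 4.3 of \cite{BFK}, which identify $R.\int_{-c/\sqrt{D}}^{i\infty}f(z)\left(\sqrt{D}z+c\right)^{k-1}dz$ with the twisted central value $i^{-k}D^{\frac{k-1}{2}}L_f^\ast\bigl(\zeta_{-c}^{\sqrt{D}},k\bigr)$ and supply the identity $L^\ast_{\xi_{2-2k}(\cM)}\bigl(\zeta_{-c}^{\sqrt{D}},k\bigr)=-\frac{(4\pi)^{2k-1}}{(2k-2)!}\,i^{2k}\,\overline{L^\ast_{\DD^{2k-1}(\cM)}\bigl(\zeta_{-c}^{\sqrt{D}},k\bigr)}$ directly at the level of $L$-values, valid for \emph{every} $\cM\in\Hcusp$ at once. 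In particular the paper never splits $\cM=\cM_0+G$ in the square case and never needs a square-discriminant analogue of Theorem \ref{thm:cycleperiod} or of Lemma \ref{lem:qeq}.

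The gap in your proposal is precisely the lemma you flag as ``the main obstacle'' and then proceed to use: the existence of an expansion $\CC(f;Q)=\sum_{n=0}^{2k-2}i^{-n+1}\widetilde{q}^{(n)}r_n(f)$ with \emph{real} class-dependent coefficients satisfying $\widetilde{q}^{(0)}=\widetilde{q}^{(2k-2)}$. Everything in your second and third paragraphs hinges on this, and nothing in the proposal proves it. Two concrete difficulties stand in the way. First, your suggested derivation (binomial expansion, incomplete $\Gamma$-factors, and the ``$s\leftrightarrow k-s$ symmetry'' of $L_f^\ast$) does not naturally output a linear combination of the $r_n(f)$; the $L$-series evaluation outputs exactly the twisted central value above, i.e.\ it leads to the paper's argument, not to a period formula. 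A period formula would instead require a modular-symbol-style decomposition of the path from $-c/\sqrt{D}$ to $i\infty$ into unimodular segments $\gamma\{0,\infty\}$, and for weakly holomorphic $f$ this demands a justification that the regularized integral \eqref{eqn:regdef} is additive across the intermediate cusps (the two regularizations introduced at each intermediate cusp must cancel, and the analytic continuation in $u$ must exist for each segment after conjugating by the relevant scaling matrix) — none of which you establish. Second, even granting the expansion, the coefficients $\widetilde{q}^{(n)}$ are not canonically determined (the functionals $r_n$ satisfy the relations \eqref{eqn:rrel} on $S_{2k}^!$), so the symmetry $\widetilde{q}^{(0)}=\widetilde{q}^{(2k-2)}$ must be proved for the specific representation your decomposition produces; the cyclic-reduced-forms argument proving Lemma \ref{lem:qeq} has no counterpart for square discriminants, where the forms $[0,\sqrt{D},c]$ are not reduced in the sense used there. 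So while your strategy is plausibly salvageable with substantial extra work, as written it assumes its key ingredient, whereas the paper's proof closes the square case in a few lines by working with the twisted $L$-values themselves.
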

\begin{proof}
For the equivalence classes of quadratic forms of discriminant $D$, we may take as a set of representatives (cf. Lemma 3.31 and Lemma 3.32 of \cite{Lemmermeyer} for a modern version of Gauss's theory \cite{Gauss})
$$
\left\{ \left[0,\sqrt{D},c\right]\Big| 0\leq c<\sqrt{D}\right\}.
$$
We hence have 
\begin{equation}\label{eqn:sqr}
\CC\left(\xi_{2-2k}\left(\cM\right);Q\right)=R.\int_{-\frac{c}{\sqrt{D}}}^{i\infty} \xi_{2-2k}\left(\cM\right)(z)\left(\sqrt{D}z+c\right)^{k-1}dz.
\end{equation}
The right-hand side of \eqref{eqn:sqr} was evaluated in Theorems 4.2 and 4.3 of \cite{BFK} as 
\begin{multline*}
i^{-k}D^{\frac{k-1}{2}} L_{\xi_{2-2k}\left(\cM\right)}^*\left(\zeta_{-c}^{\sqrt{D}},k\right)\\
=-\frac{\left(4\pi\right)^{2k-1}}{(2k-2)!}i^{k}D^{\frac{k-1}{2}}\overline{ L_{\DD^{2k-1}\left(\cM\right)}^*\left(\zeta_{-c}^{\sqrt{D}},k\right)}=-\frac{\left(4\pi\right)^{2k-1}}{(2k-2)!}\overline{\CC\left(\DD^{2k-1}(\cM);Q\right)},
\end{multline*}
where in the last equality we again use Theorem 4.2 of \cite{BFK} and then reverse the argument in \eqref{eqn:sqr}.  
\end{proof}

\section{Proof of Theorem \ref{thm:rmodular}}\label{sec:Shintani}
In this section, we concentrate on the Shintani lifts defined in \eqref{eqn:shindef}.  Using Theorem \ref{thm:cycleequal} to relate the Shintani lift on $S_{2k}^!$ to the classical Shintani lift on $S_{2k}$, in order to compute the Shintani lifts of all weakly holomorphic modular forms, it remains to determine the cycle integrals of the weight $2k$ Eisenstein series 
\begin{equation}\label{eqn:Eisdef}
G_{2k}(z):=-\frac{B_{2k}}{4k} + \sum_{n=1}^{\infty} \sigma_{2k-1}(n)q^n,
\end{equation}
where $B_{\ell}$ is the $\ell$th Bernoulli number and $\sigma_s(n):=\sum_{d\mid n} d^s$.  Kohnen and Zagier (see pages 240--241 of \cite{KohnenZagierRational}) related the cycle integrals of Eisenstein series to the zeta functions
$$
\zeta_{[Q]}(s):=\sum_{\substack{(m,n)\in \Gamma_{Q}\backslash\Z^2\\ Q(m,n)>0}} Q(m,n)^{-s}.
$$
Note that these series converge for $\re(s)$ sufficiently large and are independent of the representatives $Q$ of $\QQ_D/\SL_2(\Z)$.  The relation between the cycle integrals of Eisenstein series and $\zeta_{[Q]}(s)$ is given in the following lemma.
\begin{lemma}\label{lem:Eiscycle}
For every $Q\in \QQ_D$, one has 
\begin{equation}\label{eqn:Eiscycle}
\CC\left(G_{2k};Q\right)=\frac{1}{2}(-1)^k\zeta_{[Q]}\left(1-k\right).
\end{equation}
\end{lemma}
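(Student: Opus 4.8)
The plan is to realize $G_{2k}$ as a lattice Eisenstein series, unfold the cycle integral against the $\Gamma_Q$-orbits of $\Z^2\setminus\{0\}$, evaluate the resulting elementary geodesic integrals in closed form, and finally reassemble the answer into a value of $\zeta_{[Q]}$. Concretely, using the classical evaluation of $\zeta(2k)$ one writes
$$
G_{2k}(z)=\frac{(-1)^k(2k-1)!}{2(2\pi)^{2k}}\sum_{(m,n)\in\Z^2\setminus\{0\}}\frac{1}{(mz+n)^{2k}},
$$
so that, after inserting this expansion into $\CC(G_{2k};Q)$ and interchanging summation and integration (legitimate by the uniform convergence of the weight $2k>2$ Eisenstein series on compact subarcs in the non-square case, and via the regularization of \cite{BFK} in the square case), the problem reduces to computing $\int_{C_Q}(mz+n)^{-2k}Q(z,1)^{k-1}\,dz$ for each lattice point.

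Next I would unfold. Since $g_Q$ is an automorph of $Q$ and $Q(z,1)\,dz$ is $\Gamma_Q$-invariant, the change of variables $w=g_Q^j z$ together with the cocycle relation for $(mw+n)^{-2k}$ moves the integral over $C_Q$ of the term indexed by $(m,n)g_Q^j$ onto the integral over $g_Q^jC_Q$ of the term indexed by $(m,n)$. Summing over $j$ replaces $C_Q$ by the full geodesic $S_Q$, yielding
$$
\CC(G_{2k};Q)=\frac{(-1)^k(2k-1)!}{2(2\pi)^{2k}}\sum_{(m,n)\in\Gamma_Q\backslash(\Z^2\setminus\{0\})}\int_{S_Q}\frac{Q(w,1)^{k-1}}{(mw+n)^{2k}}\,dw.
$$
The key simplification is that $Q(w,1)^{k-1}$ is a polynomial of degree $2k-2$ while $(mw+n)^{-2k}$ has a pole of order $2k$ at the real point $-n/m$; hence the residue there vanishes, the arc integral over $S_Q$ equals the integral over the real chord joining the two (real) endpoints of $S_Q$, and a direct Beta-integral evaluation gives
$$
\int_{S_Q}\frac{Q(w,1)^{k-1}}{(mw+n)^{2k}}\,dw=(-1)^{k-1}\frac{((k-1)!)^2\,D^{k-\frac12}}{(2k-1)!}\,Q(n,-m)^{-k},
$$
interpreted by analytic continuation in the parameter $-n/m$ when the pole lies between the endpoints.

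Finally I would reassemble. Writing $Q(n,-m)$ as the value at $(m,n)$ of the $\SL_2(\Z)$-equivalent form $[c,-b,a]$ and splitting the orbit sum according to the sign of this value identifies it with a combination of the indefinite-form zeta functions at $s=k$. Invoking the functional equation of $\zeta_{[Q]}$ relating its values at $s$ and $1-s$ (equivalently, Zagier's description of the special value $\zeta_{[Q]}(1-k)$), and collecting the explicit $\Gamma$- and $D$-factors against the constant above, collapses everything to $\tfrac12(-1)^k\zeta_{[Q]}(1-k)$. For non-square $D$ this computation is exactly that of Kohnen and Zagier (pp.~240--241 of \cite{KohnenZagierRational}); the new input is only the justification of the unfolding for square $D$ through the regularized integral of \cite{BFK}. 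The main obstacle is precisely this last passage from $s=k$ to $s=1-k$: the orbit sum converges and is elementary at $s=k$, but its identification with the special value $\zeta_{[Q]}(1-k)$ rests on the analytic continuation and functional equation of the (indefinite) binary-form zeta function, and the bookkeeping of signs and normalizing factors there is the delicate step.
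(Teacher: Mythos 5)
Your strategy is, in outline, the right one, and it is worth noting that the paper itself offers no proof of this lemma at all: it simply attributes the computation to pp.~240--241 of \cite{KohnenZagierRational}. For non-square $D$, your unfolding of the lattice Eisenstein series against $\Gamma_Q\backslash\left(\Z^2\setminus\{0\}\right)$, the vanishing-residue observation (the residue at the order-$2k$ pole involves the $(2k-1)$st derivative of the degree $2k-2$ polynomial $Q(w,1)^{k-1}$, which vanishes identically, so each term even has a rational primitive), and the final passage from $s=k$ to $s=1-k$ are precisely the classical computation behind that citation. One concrete error: your closed-form arc integral has the wrong sign relative to the paper's orientation of $S_Q$ (counterclockwise for $a>0$). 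Test it on the pole-free term $(m,n)=(0,1)$: for $Q=[a,b,c]$ with $a>0$, the substitution $w=-\frac{b}{2a}+\frac{\sqrt{D}}{2a}u$ sends the right endpoint to $u=1$ and the left to $u=-1$, so
\begin{equation*}
\int_{S_Q}Q(w,1)^{k-1}\,dw=\left(\frac{D}{4a}\right)^{k-1}\frac{\sqrt{D}}{2a}\int_{1}^{-1}\left(u^2-1\right)^{k-1}du=(-1)^{k}\,\frac{\left((k-1)!\right)^2D^{k-\frac{1}{2}}}{(2k-1)!}\,Q(n,-m)^{-k},
\end{equation*}
whereas your formula asserts the sign $(-1)^{k-1}$. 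Since the entire content of the lemma is the constant $\frac{1}{2}(-1)^k$, this cannot be left to the deferred ``bookkeeping''; with the sign corrected, your reassembly needs exactly $\zeta_{[Q]}(1-k)=(-1)^k\Gamma(k)^2(2\pi)^{-2k}D^{k-\frac{1}{2}}\bigl(\zeta_{[Q]}(k)+(-1)^k\zeta_{[-Q]}(k)\bigr)$, and you should say explicitly that splitting the orbit sum by sign produces both classes $[Q]$ and $[-Q]$ (via $Q(n,-m)=[c,-b,a](m,n)$ and $[c,-b,a]\sim Q$).

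The genuine gap is the square-discriminant case, which the lemma asserts (``for every $Q\in\QQ_D$'') and which Theorem \ref{thm:rmodular} (3) actually uses, since square $m$ occur in Cohen's series $\CoH_k$. There your argument breaks structurally, not just technically: $\Gamma_Q$ is trivial, so there is nothing to unfold, and $C_Q=S_Q$ is an infinite geodesic ending at a cusp (in the paper's normal form, the vertical line from $-c/\sqrt{D}$ to $i\infty$). The lattice then contains null vectors $(m,n)$ with $Q(n,-m)=0$ --- these exist exactly when $D$ is a square --- for which the pole of $(mw+n)^{-2k}$ sits at an endpoint of $S_Q$, the corresponding term diverges, and the closed form $Q(n,-m)^{-k}$ is meaningless. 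Invoking ``the regularization of \cite{BFK}'' to legitimize interchanging sum and integral does not address this: that regularization is a statement about the total integral, while the term-by-term series you would need does not converge. Likewise, the analytic continuation and functional equation of $\zeta_{[Q]}(s)$ that you invoke at the last step is Hecke--Landau theory for non-square discriminants; for square $D$ the zeta function is a Hurwitz-type object with different analytic behavior, and no cited source covers this step. The way the paper's own machinery handles square discriminants elsewhere (cf.\ Proposition \ref{prop:cycleequalsq}) is to evaluate the regularized cycle integral directly as a regularized $L$-value $L_{f}^{\ast}\left(\zeta_{-c}^{\sqrt{D}},k\right)$ via Theorems 4.2 and 4.3 of \cite{BFK}; applied to $f=G_{2k}$, whose $L$-series degenerates to combinations of Hurwitz zeta functions, that is the natural way to complete your proof for square $D$.
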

The image of $G_{2k}$ under $\shin_1$ is well-known to be a constant multiple of Cohen's \cite{Cohen} weight $k+\frac{1}{2}$ Eisenstein series (this statement is essentially contained in \cite{KohnenZagierRational}, but we add the details below for the convenience of the reader)
\begin{equation}\label{eqn:Cohendef}
\CoH_{k}(z):=\zeta(1-2k)+\sum_{\substack{m\in \N\\ (-1)^km\equiv 0,1\pmod{4}}} H\left(k,(-1)^km\right) q^{m},
\end{equation}
where for $D=D_0f^2$ with a fundamental discriminant $D_0$ and $f\in \N$ we define ($\mu$ is the usual M\"obius function)
$$
H(k,D):=L\left(1-k,\left(\frac{D_0}{\cdot}\right)\right)\sum_{d\mid f}\mu(d)\left(\frac{D_0}{d}\right)d^{k-1}\sigma_{2k-1}\left(\frac{f}{d}\right).
$$
Combining the fact that $G_{2k}$ maps to a modular form under $\shin_1$ with Theorem \ref{thm:cycleperiod},
 we are able to prove that the Shintani lift \eqref{eqn:shindef} maps weakly holomorphic modular forms to modular forms.  
\begin{proof}[Proof of Theorem \ref{thm:rmodular}]
Using Theorem \ref{thm:cycleequal} termwise immediately implies that \eqref{eqn:shinrel} holds for every $\cM\in \Hcusp$, yielding (2).

Since $\xi_{2-2k}$ is surjective \cite{BruinierFunke} and $g^c$ is a cusp form if $g$ is a cusp form, (1) then follows from (2) and the classical results of Shintani \cite{Shintani} for cusp forms.  

In order to prove (3), we use the following identity of Siegel \cite{SiegelEisen}:
$$
\sum_{Q\in \QQ_D/\SL_2(\Z)}\zeta_{[Q]}(1-k) = \zeta(1-k)H(k,D).
$$
Combining this with Lemma \ref{lem:Eiscycle} and noting that $\ChiQ_{1}(Q)=1$ for every $Q$ yields the third statement of the theorem.
\end{proof}

\end{document}